\newtheorem{theorem}{Theorem}[section]
\newtheorem{lemma}[theorem]{Lemma}
\newtheorem{definition}[theorem]{Definition}
\newtheorem{remark}[theorem]{Remark}
\newtheorem{corollary}[theorem]{Corollary}
\newtheorem{example}[theorem]{Example}
\def\p{\partial}
\def\to{\rightarrow}
\def\c{\mathcal}
\def\r{\mathrm}
\def\f{\mathfrak}
\def\bb{\mathbb}
\begin{document}
\baselineskip15pt
%%%%%%%%%%%%%%%%%%%%%%%%%%%%%%%%%%%%%%%%%%%%%%%%%%%%%%%%%%%%%%%%%%%%%%%%%%%%%%%%%%%%%%%%%%%%%%%%%%%%%%%%%%%%%%%%%%%%%%%%%%
\title[Stability of first order PDEs]
{On the stability of first order PDEs associated to vector fields}
%%%%%%%%%%%%%%%%%%%%%%%%%%%%%%%%%%%%%%%%%%%%%%%%%%%%%%%%%%%%%%%%%%%%%%%%%%%%%%%%%%%%%%%%%%%%%%%%%%%%%%%%%%%%%%%%%%%%%%%%%%
\author[M.M. Sadr]{Maysam Maysami Sadr}
\address{Department of Mathematics\\
Institute for Advanced Studies in Basic Sciences\\
P.O. Box 45195-1159, Zanjan 45137-66731, Iran}
\email{sadr@iasbs.ac.ir}
%%%%%%%%%%%%%%%%%%%%%%%%%%%%%%%%%%%%%%%%%%%%%%%%%%%%%%%%%%%%%%%%%%%%%%%%%%%%%%%%%%%%%%%%%%%%%%%%%%%%%%%%%%%%%%%%%%%%%%%%%%
\subjclass[2010]{35R01, 35A01, 35A35, 37C10}
\keywords{Hyers-Ulam stability; partial differential equation; vector field; flow.}
%%%%%%%%%%%%%%%%%%%%%%%%%%%%%%%%%%%%%%%%%%%%%%%%%%%%%%%%%%%%%%%%%%%%%%%%%%%%%%%%%%%%%%%%%%%%%%%%%%%%%%%%%%%%%%%%%%%%%%%%%%
\begin{abstract}
Let $\c{M}$ be a manifold, $\c{V}$ be a vector field on $\c{M}$, and $\c{B}$ be a Banach space.
For any fixed function $f:\c{M}\to\c{B}$ and any fixed complex number $\lambda$,
we study Hyers-Ulam stability of the global differential equation $\c{V}y=\lambda y+f$.
\end{abstract}
%%%%%%%%%%%%%%%%%%%%%%%%%%%%%%%%%%%%%%%%%%%%%%%%%%%%%%%%%%%%%%%%%%%%%%%%%%%%%%%%%%%%%%%%%%%%%%%%%%%%%%%%%%%%%%%%%%%%%%%%%%
%%%%%%%%%%%%%%%%%%%%%%%%%%%%%%%%%%%%%%%%%%%%%%%%%%%%%%%%%%%%%%%%%%%%%%%%%%%%%%%%%%%%%%%%%%%%%%%%%%%%%%%%%%%%%%%%%%%%%%%%%%
%%%%%%%%%%%%%%%%%%%%%%%%%%%%%%%%%%%%%%%%%%%%%%%%%%%%%%%%%%%%%%%%%%%%%%%%%%%%%%%%%%%%%%%%%%%%%%%%%%%%%%%%%%%%%%%%%%%%%%%%%%
\maketitle
%%%%%%%%%%%%%%%%%%%%%%%%%%%%%%%%%%%%%%%%%%%%%%%%%%%%%%%%%%%%%%%%%%%%%%%%%%%%%%%%%%%%%%%%%%%%%%%%%%%%%%%%%%%%%%%%%%%%%%%%%%
%%%%%%%%%%%%%%%%%%%%%%%%%%%%%%%%%%%%%%%%%%%%%%%%%%%%%%%%%%%%%%%%%%%%%%%%%%%%%%%%%%%%%%%%%%%%%%%%%%%%%%%%%%%%%%%%%%%%%%%%%%
%%%%%%%%%%%%%%%%%%%%%%%%%%%%%%%%%%%%%%%%%%%%%%%%%%%%%%%%%%%%%%%%%%%%%%%%%%%%%%%%%%%%%%%%%%%%%%%%%%%%%%%%%%%%%%%%%%%%%%%%%%
%%%%%%%%%%%%%%%%%%%%%%%%%%%%%%%%%%%%%%%%%%%%%%%%%%%%%%%%%%%%%%%%%%%%%%%%%%%%%%%%%%%%%%%%%%%%%%%%%%%%%%%%%%%%%%%%%%%%%%%%%%
\section{Introduction}
Since 1941 that Hyers \cite{Hyers1} gave a solution for a famous stability problem proposed by Ulam \cite{Ulam1},
Hyers-Ulam stability problem (HUS for short) have been considered by many authors for various types of functional equations.
For historical developments in this area see \cite{Jung1}.
The following four lines describe briefly the general framework of HUS:
\begin{enumerate}
\item[(i)] Take a type of functional equation;
\item[(ii)] take a notion of `approximate solution'; and
\item[(iii)] take a notion of `distance' between approximate solutions.
\item[(iv)] Then find conditions under which the following HUS rule is satisfied:
Sufficiently close to any approximate solution there exists an exact solution.
\end{enumerate}
Let us explain the framework by a traditional example. Consider the Cauchy functional equation:
$$f(xy)=f(x)+f(y).$$
This equation is meaningful for functions $f:\c{G}\to\c{B}$ where $\c{G}$ is an arbitrary (multiplicative) group and $\c{B}$ is an arbitrary
Banach space. Indeed, in (i) above by `a type of functional equation' we mean a formal equation that can be expressed in a similar form
for functions with various domains and ranges. For $\delta\geq0$, $g:\c{G}\to\c{B}$ is called a $\delta$-approximate solution if
$\sup_{x,y\in\c{G}}\|g(xy)-g(x)-g(y)\|\leq\delta$. The distance between two approximate solutions $g,g'$ is defined by the supremum norm
$\|g-g'\|_\infty:=\sup_{x\in\c{G}}\|g(x)-g'(x)\|$. Then the problem is to characterize those groups $\c{G}$ and Banach spaces $\c{B}$ for which
the following HUS rule is satisfied:
\begin{quote}
For every $\epsilon\geq0$ there exists $\delta\geq0$ such that for every $\delta$-approximate solution $g$
there is an exact solution $f$ with $\|f-g\|_\infty\leq\epsilon$.
\end{quote}
A solution for this specific HUS problem has been given by Forti. He showed in \cite{Forti1} that HUS rule is satisfied for
every amenable group $\c{G}$ and for $\c{B}=\mathbb{R}$, the Banach space of real numbers.
For another example of this kind see \cite{Sadr1}.

Since over a decade ago HUS for differential equations has been considered by many authors.
See \cite{Jung3,Jung5,MiuraMiyajimaTakahasi1,MiuraJungTakahasi1,WangZhouSun1,PopaRasa1,CimpeanPopa1} and references therein.
Almost all papers written in this context have dealt with local ODEs or PDEs.
Here by `local equation' we mean a differential equation on a region in Euclidean space (or a differential equation on a manifold
which depends explicitly to a coordinate system). Our main goal of this note is to bring the attention of researchers to HUS of
global differential equations on smooth manifolds. Here by `a global equation' we mean a differential equation that can be expressed
for functions (or tensors) on a class of manifolds and hence dose not depend on any particular coordinate system. Thus `a global differential equation'
can be considered as `a type of functional equation'.
There are two reasons for investigating HUS for global differential equations:
The first one is that at least from a geometric viewpoint existence of global solutions of differential equations on manifolds are much more important
than existence of local solutions. Also many types of qualitative properties of the solutions are meaningful only in the global cases.
The second reason is that the form of a global differential equation on a fixed manifold changes from a coordinate system to another one.
Thus HUS of the global equation may imply HUS of seemingly different local equations.
For example, our main result (Theorem \ref{T1}) recovers and generalizes
results of \cite{CimpeanPopa1,Jung5} with a unified proof and rather simpler than proofs in the mentioned papers.
See examples and remarks in Section \ref{S2}.

Let us offer a simple framework for investigation of HUS of global differential equations:
\begin{enumerate}
\item[(a)] Take a global linear differential operator $\c{D}$ which acts on Banach valued functions on each manifold in a specific
class $\mathfrak{M}$ of (finite or infinite dimensional) manifolds.
\item[(b)] Consider the differential equation
\begin{equation}\label{E00}
\c{D}y=f
\end{equation}
for functions $y:\c{M}\to\c{B}$ where $\c{M}$ belongs to $\f{M}$, $\c{B}$ is a Banach space,
and $f:\c{M}\to\c{B}$ is a fixed map. Let the HUS rule be expressed as follows: For every $\epsilon\geq0$ there is $\delta\geq0$ such that
if $z:\c{M}\to\c{B}$ is a $\delta$-approximate solution of (\ref{E00}) (i.e. $\|\c{D}z-f\|_\infty\leq\delta$)
then there exists an exact solution $y$ such that $\|y-z\|_\infty\leq\epsilon$.
\item[(c)] Then the HUS problem is as follows: Characterize those triples $(\c{M},\c{B},f)$ for which the HUS rule is satisfied.
 \end{enumerate}
For example, let $\f{M}$ be class of Riemannian manifolds $(\c{M},g)$ and $\c{D}:=\Delta_g$ the Laplace operator.
There are many interesting variants of the above framework. By a little work the framework can be restated for global nonlinear
differential operators $\c{D}$ or for global (pseudo) differential operators $\c{D}$ which act on sections of vector bundles.
Thus, for instance, the HUS of Equation (\ref{E00}) in the case that $\f{M}$ is class of spin manifolds, $\c{D}$ is the Dirac operator,
and $f$ is a spinor field, can be defined.

In this note we consider HUS of one of the most simple global PDEs.
Indeed, following the notations of our framework we let $\f{M}$ be the class of triples $(\c{M},\c{V},\lambda)$ where $\c{M}$ is a manifold,
$\c{V}$ is a vector field on $\c{M}$, and $\lambda$ is a complex number; and we let $\c{D}$ be the first order linear partial differential operator
defined by $\c{D}y:=\c{V}y-\lambda y$. The idea for considering this subject arose from \cite{CimpeanPopa1}.
In \cite{CimpeanPopa1} Cimpean and Popa studied HUS of the (local) differential equation (called Euler's equation),
\begin{equation}\label{E0.5}
x_1\frac{\p y}{\p x_1}+\cdots+x_n\frac{\p y}{\p x_n}=\lambda y
\end{equation}
for functions $y$ from $\bb{R}^n_+$ to a Banach space, where $\lambda$ is a real number. They showed that if $\lambda\neq0$
and if a function $z$ is an approximate solution of (\ref{E0.5}), that is
$$\|x_1\frac{\p z}{\p x_1}(x)+\cdots+x_n\frac{\p z}{\p x_n}(x)-\lambda z(x)\|\leq\epsilon\hspace{10mm}(x\in\bb{R}^n_+),$$
then there is an exact solution $y$ of (\ref{E0.5}) satisfying $\|y-z\|_\infty\leq\epsilon\lambda^{-1}$. In other words, (\ref{E0.5})
has HUS if $\lambda\neq0$. From a geometric viewpoint, (\ref{E0.5}) is of the form $\c{V}y=\lambda y$ where $\c{V}$ denotes the vector field
on $\bb{R}^n_+$ defined by $x\mapsto x\partial_x$. We will see in Example \ref{Ex1} that this result can be generalized to a rather big class of
vector fields on $\bb{R}^n_+$.

Let us restate exactly our HUS framework in this spacial case but this time we also take care about smoothness properties of
involved manifolds and functions. Let $1\leq p\leq\infty$. From now on we let $\c{M}$ denote a $\r{C}^{p+1}$-manifold of dimension
$n\geq1$, $\c{V}$ a $\r{C}^p$-vector field on $\c{M}$, and $\c{B}$ a Banach space. For $k\geq0$, $\r{C}^k(\c{M};\c{B})$ denotes the space
of all $\r{C}^k$ maps from $\c{M}$ into $\c{B}$.
Suppose that $y:\c{M}\to\c{B}$ is differentiable along every integral curve of $\c{V}$. Then we let $\c{V}y:\c{M}\to\c{B}$ be defined
by $\c{V}y(x)=(y\circ\gamma_x)'(0)$ where $\gamma_x$ is an integral curve of $\c{V}$ beginning at $x$.
(Note that the $\r{C}^1$-smoothness property of $\c{V}$ implies that $\c{V}y$ is well-defined, see \cite[Theorem 2.1]{Lang1}.)
We denote by $\r{C}_\c{V}^1(\c{M};\c{B})$ the class of those functions $y:\c{M}\to\c{B}$ such that $y\in\r{C}^0(\c{M};\c{B})$,
$y$ is differentiable along every integral curve of $\c{V}$, and such that $\c{V}y\in\r{C}^0(\c{M};\c{B})$.
Following our framework and also \cite{MiuraJungTakahasi1,MiuraMiyajimaTakahasi1,WangZhouSun1} we make the definition below.
\begin{definition}
Let $\lambda$ be a fixed complex number and $f:\c{M}\to\c{B}$ be a fixed map. Consider the following differential equation for functions
$y:\c{M}\to\c{B}$.
\begin{equation}\label{E1}
\c{V}y=\lambda y+f
\end{equation}
We say that Equation (\ref{E1}) has $\r{C}_\c{V}^1$-HUS if the following statement holds.
There exists a real constant $K\geq0$ such that for every $\epsilon\geq0$ and every $y\in\r{C}_\c{V}^1(\c{M};\c{B})$ if
\begin{equation}\label{E2}
\|\c{V}y-\lambda y-f\|_\infty\leq\epsilon,
\end{equation}
then there is a $z\in\r{C}_\c{V}^1(\c{M};\c{B})$ such that
\begin{equation}\label{E2.5}
\c{V}z=\lambda z+f\hspace{5mm}\text{and}\hspace{5mm}\|y-z\|_\infty\leq K\epsilon.
\end{equation}
We call $K$ a stability constant for Equation (\ref{E1}).
Let $1\leq p_1,p_2\leq p+1$. We say that
(\ref{E1}) has $\r{C}^{p_1\to p_2}$-HUS if the above statement holds while $y\in\r{C}^{p_1}(\c{M};\c{B})$ and $z\in\r{C}^{p_2}(\c{M};\c{B})$.
\end{definition}
Our main result (Theorem \ref{T1}) asserts that under some mild conditions Equation (\ref{E1})
has $\r{C}_\c{V}^1$-HUS. The method of the proof is very simple: (\ref{E1}) becomes a first order ODE along every integral curve of
$\c{V}$; then one can apply the results of  \cite{PopaRasa1}.
%%%%%%%%%%%%%%%%%%%%%%%%%%%%%%%%%%%%%%%%%%%%%%%%%%%%%%%%%%%%%%%%%%%%%%%%%%%%%%%%%%%%%%%%%%%%%%%%%%%%%%%%%%%%%%%%%%%%%%%%%%
%%%%%%%%%%%%%%%%%%%%%%%%%%%%%%%%%%%%%%%%%%%%%%%%%%%%%%%%%%%%%%%%%%%%%%%%%%%%%%%%%%%%%%%%%%%%%%%%%%%%%%%%%%%%%%%%%%%%%%%%%%
%%%%%%%%%%%%%%%%%%%%%%%%%%%%%%%%%%%%%%%%%%%%%%%%%%%%%%%%%%%%%%%%%%%%%%%%%%%%%%%%%%%%%%%%%%%%%%%%%%%%%%%%%%%%%%%%%%%%%%%%%%
%%%%%%%%%%%%%%%%%%%%%%%%%%%%%%%%%%%%%%%%%%%%%%%%%%%%%%%%%%%%%%%%%%%%%%%%%%%%%%%%%%%%%%%%%%%%%%%%%%%%%%%%%%%%%%%%%%%%%%%%%%
\section{The main result}\label{S2}
The following lemma is a special case of \cite[Theorem 2.2]{PopaRasa1}.
\begin{lemma}\label{L1}
Suppose that the real part of $\lambda$ is nonzero: $\f{R}\lambda\neq0$.
Let the inequality
$$\|a'-\lambda a-h\|_\infty\leq\epsilon$$
be satisfied for $a\in\r{C}^1(\bb{R};\c{B})$ and $h\in\r{C}^0(\bb{R};\c{B})$. Then there is a unique $b\in\r{C}^1(\bb{R};\c{B})$ satisfying
$b'=\lambda b+h$ and $\|a-b\|_\infty<\infty$. Moreover, $\|a-b\|_\infty\leq\epsilon|\f{R}\lambda|^{-1}$ and $b$ is explicitly given by
\begin{equation}\label{E2.75}
b(t)=e^{\lambda t}[\int_0^th(s)e^{-\lambda s}\r{d}(s)+\int_0^\omega\alpha(s)e^{-\lambda s}\r{d}(s)+a(0)],
\end{equation}
where $\alpha=a'-\lambda a-h$ and $\omega=+\infty$ if $\f{R}\lambda>0$ and otherwise $\omega=-\infty$.
\end{lemma}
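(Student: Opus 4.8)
The plan is to derive Lemma~\ref{L1} directly from \cite[Theorem 2.2]{PopaRasa1} by recognizing that the inhomogeneous linear equation $a'-\lambda a=h$ is precisely the setting of that reference with the roles of the coefficient and the forcing term made explicit. First I would recall the statement of \cite[Theorem 2.2]{PopaRasa1}: for a first order linear equation $u'=g u+h$ on $\bb{R}$ with $g$ having a real part bounded away from zero (here $g\equiv\lambda$ with $\f{R}\lambda\neq0$), any $\r{C}^1$ function $a$ satisfying $\|a'-\lambda a-h\|_\infty\leq\epsilon$ lies within $\epsilon|\f{R}\lambda|^{-1}$ of a genuine solution, and that solution is unique among those at finite supremum distance from $a$. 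The sign of $\f{R}\lambda$ dictates whether one integrates the error term toward $+\infty$ or $-\infty$, which is exactly the dichotomy encoded by $\omega$.

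Next I would verify the explicit formula \eqref{E2.75}. Writing $\alpha=a'-\lambda a-h$, so that $a'=\lambda a+h+\alpha$ with $\|\alpha\|_\infty\leq\epsilon$, variation of parameters gives $a(t)=e^{\lambda t}[a(0)+\int_0^t(h(s)+\alpha(s))e^{-\lambda s}\,\r{d}s]$. The candidate solution $b$ should satisfy $b'=\lambda b+h$, and to make $a-b$ bounded one must subtract off the part of the $\alpha$-integral that fails to converge. If $\f{R}\lambda>0$, then $e^{\lambda t}\int_t^{+\infty}\alpha(s)e^{-\lambda s}\,\r{d}s$ is a bounded quantity (the integral converges since $|\alpha(s)e^{-\lambda s}|\leq\epsilon e^{-(\f{R}\lambda)s}$), so one sets $b(t)=e^{\lambda t}[a(0)+\int_0^t h(s)e^{-\lambda s}\,\r{d}s+\int_0^{+\infty}\alpha(s)e^{-\lambda s}\,\r{d}s]$, matching \eqref{E2.75} with $\omega=+\infty$. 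A symmetric computation handles $\f{R}\lambda<0$ with the tail taken toward $-\infty$. One then checks $b'=\lambda b+h$ by differentiating, and estimates $\|a-b\|_\infty=\sup_t|e^{\lambda t}\int_t^\omega\alpha(s)e^{-\lambda s}\,\r{d}s|\leq\epsilon\int_0^\infty e^{-|\f{R}\lambda|u}\,\r{d}u=\epsilon|\f{R}\lambda|^{-1}$ via the substitution $u=\pm(s-t)$.

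Finally I would address uniqueness: if $b_1,b_2$ both solve $u'=\lambda u+h$ with $\|a-b_i\|_\infty<\infty$, then $b_1-b_2$ solves the homogeneous equation, hence equals $Ce^{\lambda t}$ for some constant $C\in\c{B}$; boundedness of $b_1-b_2=(a-b_2)-(a-b_1)$ forces $C=0$ since $\f{R}\lambda\neq0$ makes $e^{\lambda t}$ unbounded in one direction. I expect the only mild obstacle to be bookkeeping: confirming that the integrals in \eqref{E2.75} converge (which needs only the bound on $\alpha$ and the sign condition on $\f{R}\lambda$), that $b\in\r{C}^1(\bb{R};\c{B})$ (immediate from the fundamental theorem of calculus applied to the Banach-valued integrand $h+\alpha\in\r{C}^0$), and that the formula for $b$ is independent of the basepoint $0$ up to the constant absorbed into $a(0)$. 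Since everything reduces to \cite[Theorem 2.2]{PopaRasa1}, the argument is essentially a matter of unwinding that reference's notation into the present form.
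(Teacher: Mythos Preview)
Your proposal is correct and follows essentially the same route as the paper: write $a'=\lambda a+h+\alpha$, apply variation of parameters to express $a$, define $b$ by \eqref{E2.75}, verify $b'=\lambda b+h$ and the bound $\|a-b\|_\infty\leq\epsilon|\f{R}\lambda|^{-1}$ from the tail integral, then obtain uniqueness from the fact that a bounded solution of the homogeneous equation must vanish. The paper's proof is just a terser version of exactly this computation.
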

\begin{proof}
Since $a'=\lambda a+h+\alpha$ we have
$a(t)=e^{\lambda t}[\int_0^t(h+\alpha)(s)e^{-\lambda s}\r{d}(s)+a(0)]$. Let $b$ be given by (\ref{E2.75}). Then $b'=\lambda b+h$ and we have
$\|a(t)-b(t)\|=e^{\f{R}\lambda t}\|\int_t^\omega\alpha(s)e^{-\lambda s}\r{d}(s)\|\leq\epsilon|\f{R}\lambda|^{-1}$.
Let $c\in\r{C}^1(\bb{R};\c{B})$ be such that $c'=\lambda c+h$ and $\|a-c\|_\infty<\infty$. Then
$\|c-b\|_\infty<\infty$ and also $c(t)=e^{\lambda t}[\int_0^th(s)e^{-\lambda s}\r{d}(s)+c(0)]$.
It follows from this latter equality and (\ref{E2.75}) that
$\|c(t)-b(t)\|=e^{\f{R}\lambda t}\|c(0)-a(0)-\int_0^\omega\alpha(s)e^{-\lambda s}\r{d}(s)\|$. Thus,
$c(0)=a(0)+\int_0^\omega\alpha(s)e^{-\lambda s}\r{d}(s)$. This completes the proof.
\end{proof}
Our main result is as follows.
\begin{theorem}\label{T1}
Suppose that $\c{V}$ is a complete vector field, $\f{R}\lambda\neq0$, and $f\in\r{C}^0(\c{M};\c{B})$.
Then Equation (\ref{E1}) has $\r{C}_\c{V}^1$-HUS with stability constant $|\f{R}\lambda|^{-1}$.
\end{theorem}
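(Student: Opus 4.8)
The plan is to reduce the global PDE on $\c{M}$ to a family of one-dimensional ODEs along the integral curves of $\c{V}$, and then invoke Lemma \ref{L1} uniformly. Since $\c{V}$ is complete, each point $x\in\c{M}$ lies on a unique integral curve $\gamma_x:\bb{R}\to\c{M}$ with $\gamma_x(0)=x$, and the flow $\phi:\bb{R}\times\c{M}\to\c{M}$, $\phi(t,x)=\gamma_x(t)$, is defined for all time and is $\r{C}^p$ (hence continuous). The key observation is that for $y\in\r{C}_\c{V}^1(\c{M};\c{B})$ the composition $a_x:=y\circ\gamma_x$ is $\r{C}^1$ on $\bb{R}$ with $a_x'(t)=\c{V}y(\gamma_x(t))$, because $\gamma_x$ restricted to any time interval is itself an integral curve of $\c{V}$ beginning at $\gamma_x(t)$; similarly $h_x:=f\circ\gamma_x$ and $\alpha_x:=(\c{V}y-\lambda y-f)\circ\gamma_x$ are continuous, and $\alpha_x=a_x'-\lambda a_x-h_x$. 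Thus the hypothesis $\|\c{V}y-\lambda y-f\|_\infty\le\epsilon$ gives $\|a_x'-\lambda a_x-h_x\|_\infty\le\epsilon$ for every $x$, so Lemma \ref{L1} applies on each curve.

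Next I would use the explicit formula (\ref{E2.75}) to define the candidate solution $z$. For each $x$, Lemma \ref{L1} produces the unique $b_x\in\r{C}^1(\bb{R};\c{B})$ with $b_x'=\lambda b_x+h_x$ and $\|a_x-b_x\|_\infty<\infty$, satisfying $\|a_x-b_x\|_\infty\le\epsilon|\f{R}\lambda|^{-1}$. I set $z(x):=b_x(0)$. There is a consistency point to check: if $x'=\gamma_x(s)$ lies on the same integral curve, then $\gamma_{x'}(t)=\gamma_x(t+s)$, so $a_{x'}(t)=a_x(t+s)$ and likewise $h_{x'}(t)=h_x(t+s)$; by uniqueness in Lemma \ref{L1} one gets $b_{x'}(t)=b_x(t+s)$, hence $z(\gamma_x(s))=b_x(s)$. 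This shows that $z\circ\gamma_x=b_x$ for every $x$, so $z$ is differentiable along every integral curve of $\c{V}$ with $\c{V}z(x)=b_x'(0)=\lambda b_x(0)+h_x(0)=\lambda z(x)+f(x)$, i.e. $z$ is an exact solution of (\ref{E1}). Moreover $\|y(x)-z(x)\|=\|a_x(0)-b_x(0)\|\le\|a_x-b_x\|_\infty\le\epsilon|\f{R}\lambda|^{-1}$ for all $x$, giving $\|y-z\|_\infty\le|\f{R}\lambda|^{-1}\epsilon$, the desired estimate with stability constant $K=|\f{R}\lambda|^{-1}$.

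The remaining obligation, and the step I expect to be the main technical obstacle, is showing that $z$ and $\c{V}z$ are continuous on $\c{M}$, i.e. that $z\in\r{C}_\c{V}^1(\c{M};\c{B})$, since Lemma \ref{L1} only controls behaviour curve-by-curve and a priori nothing couples nearby curves. The natural route is the explicit expression: from (\ref{E2.75}) with $a=a_x$, $h=h_x$, $\alpha=\alpha_x$, evaluated at $t=0$, one gets
\begin{equation*}
z(x)=b_x(0)=y(x)+\int_0^\omega (\c{V}y-\lambda y-f)(\gamma_x(s))\,e^{-\lambda s}\,\r{d}s,
\end{equation*}
where $\omega=+\infty$ if $\f{R}\lambda>0$ and $\omega=-\infty$ otherwise. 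The integrand is $(\c{V}y-\lambda y-f)(\phi(s,x))e^{-\lambda s}$, which as a function of $(s,x)$ is continuous (composition of continuous maps), bounded in norm by $\epsilon e^{-\f{R}\lambda\,s}$, and the dominating function is integrable over $[0,\omega)$ precisely because $\f{R}\lambda$ has the sign making $\int_0^\omega e^{-\f{R}\lambda\,s}\,\r{d}s<\infty$. A dominated-convergence / uniform-convergence argument (the tail integrals converge uniformly in $x$) then yields continuity of $x\mapsto z(x)$. Since $z\circ\gamma_x=b_x$ is $\r{C}^1$, differentiability of $z$ along integral curves holds by construction, and $\c{V}z=\lambda z+f$ is continuous because $z$ and $f$ are; this closes the proof. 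One should also double-check that $z$ does not depend on the choice of basepoint used to parametrize each curve, which is exactly the consistency computation above.
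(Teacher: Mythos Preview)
Your proposal is correct and follows essentially the same approach as the paper: reduce to ODEs along integral curves via the flow, apply Lemma~\ref{L1} to obtain $b_x$, set $z(x)=b_x(0)$, use the explicit formula and dominated convergence for continuity of $z$, and invoke the uniqueness in Lemma~\ref{L1} for the consistency $z\circ\gamma_x=b_x$ (hence $\c{V}z=\lambda z+f$). The only difference is cosmetic: you present the consistency check before the continuity argument, whereas the paper does them in the opposite order.
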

\begin{proof}
By completeness of $\c{V}$, the domain of the flow of $\c{V}$ is the whole $\bb{R}\times\c{M}$,
i.e. there is a smooth action $\Phi:\bb{R}\times\c{M}\to\c{M}$ such that $\Phi(0,x)=x$,
$\Phi(t+s,x)=\Phi(t,\Phi(s,x))$, and $\frac{\p\Phi}{\p t}(t,x)=\c{V}\circ\Phi(t,x)$ for every $x\in\c{M}$ and every $s,t\in\bb{R}$.
Suppose that $y\in\r{C}_\c{V}^1(\c{M};\c{B})$ satisfies (\ref{E2}).
For every $x$, $a_x:=y\circ\Phi(\cdot,x)$ belongs to $\r{C}^1(\bb{R};\c{B})$ and satisfies
$$\|a'_x-\lambda a_x-f\circ\Phi(\cdot,x)\|_\infty\leq\epsilon.$$
By Lemma \ref{L1}, there is a unique $b_x\in\r{C}^1(\bb{R};\c{B})$ such that
\begin{equation}\label{E3}
b_x'=\lambda b_x+f\circ\Phi(\cdot,x)\hspace{5mm}\text{and}\hspace{5mm}\|a_x-b_x\|_\infty\leq\epsilon|\f{R}\lambda|^{-1}.
\end{equation}
Let $z:\c{M}\to\c{B}$ be defined by $z(x):=b_x(0)$. It follows that
\begin{equation}\label{E4}
z(x)=a_x(0)+\int_0^\omega[a'_x(s)-\lambda a_x(s)-f\circ\Phi(s,x)]e^{-\lambda s}\r{d}(s),
\end{equation}
where $\omega$ is as in Lemma \ref{L1}. Then a simple application of Lebesgue's Dominated Convergence Theorem shows that $z$ is continuous.
Let $t$ and $x$ be arbitrary and for a while be fixed.
Define a function $c:\bb{R}\to\c{B}$ by $c(s):=b_x(s+t)$. Then by (\ref{E3}) we have $c'(s)=\lambda c(s)+f\circ\Phi(s,\Phi(t,x))$
and $\|a_{\Phi(t,x)}(s)-c(s)\|\leq\epsilon|\f{R}\lambda|^{-1}$. Thus the uniqueness property mentioned in Lemma \ref{L1} implies that
$c=b_{\Phi(t,x)}$. Therefore we have proved that,
\begin{equation}\label{E7}
z(\Phi(t,x))=b_x(t).
\end{equation}
This identity together with (\ref{E3}) shows that
$z$ is differentiable along every integral curve of $\c{V}$ and satisfies (\ref{E2.5}) with $K=|\f{R}\lambda|^{-1}$. Also
it follows from the identity $\c{V}z=\lambda z+f$ that $z\in\r{C}_\c{V}^1(\c{M};\c{B})$. This completes the proof.
\end{proof}
The following corollary of Theorem \ref{T1} gives a bound for approximate solutions of Equation (\ref{E1}) in the case that $\c{V}$ is
periodic and $f=0$. This result can be considered as an application of HUS.
\begin{corollary}
Suppose that $\f{R}\lambda\neq0$ and $\c{V}$ is a complete vector field for which every integral curve is periodic
(constant integral curves are included).
Then for every $y\in\r{C}_\c{V}^1(\c{M};\c{B})$ satisfying $\|\c{V}y-\lambda y\|_\infty\leq\epsilon$, the inequality
$\|y\|_\infty\leq\epsilon|\f{R}\lambda|^{-1}$ holds.
\end{corollary}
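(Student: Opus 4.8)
The plan is to derive this from Theorem \ref{T1} applied with $f=0$. Given $y\in\r{C}_\c{V}^1(\c{M};\c{B})$ with $\|\c{V}y-\lambda y\|_\infty\leq\epsilon$, Theorem \ref{T1} furnishes a $z\in\r{C}_\c{V}^1(\c{M};\c{B})$ satisfying $\c{V}z=\lambda z$ and $\|y-z\|_\infty\leq\epsilon|\f{R}\lambda|^{-1}$. Hence it suffices to prove that $z$ vanishes identically; once this is done, $\|y\|_\infty=\|y-z\|_\infty\leq\epsilon|\f{R}\lambda|^{-1}$, which is precisely the asserted bound. In other words, the corollary follows as soon as one knows that the homogeneous equation $\c{V}z=\lambda z$ has no nonzero solution under the periodicity hypothesis.

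To show $z\equiv0$, I would fix $x\in\c{M}$ and examine $z$ along the integral curve through $x$. Writing $a_x:=z\circ\Phi(\cdot,x)$, the identity $\c{V}z=\lambda z$ yields $a_x'=\lambda a_x$, so $a_x(t)=e^{\lambda t}z(x)$ for every $t\in\bb{R}$; equivalently, one may quote identity (\ref{E7}) from the proof of Theorem \ref{T1}, which with $f=0$ reads $z(\Phi(t,x))=e^{\lambda t}z(x)$. Now invoke periodicity. If the integral curve through $x$ has a period $T>0$, then $\Phi(T,x)=x$ and therefore $z(x)=e^{\lambda T}z(x)$; since $|e^{\lambda T}|=e^{T\f{R}\lambda}\neq1$ because $\f{R}\lambda\neq0$ and $T>0$, this forces $z(x)=0$. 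If instead the integral curve is constant, then $a_x$ is constant, whence $0=a_x'(0)=\lambda z(x)$, and again $z(x)=0$ since $\f{R}\lambda\neq0$ implies $\lambda\neq0$. As $x$ was arbitrary, $z\equiv0$.

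I do not expect a genuine obstacle here; the only points requiring a little care are that the period $T$ may depend on $x$ (but a single such $T$ is all that is used at the point $x$), and that the equilibrium points of $\c{V}$ must be handled separately, which is exactly why the statement explicitly includes the constant integral curves.
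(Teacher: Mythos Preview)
Your proof is correct and follows essentially the same route as the paper: apply Theorem~\ref{T1} with $f=0$ to obtain $z$ with $\c{V}z=\lambda z$ and $\|y-z\|_\infty\le\epsilon|\f{R}\lambda|^{-1}$, then use periodicity of the integral curves together with $\f{R}\lambda\neq0$ to force $z\equiv0$. The paper's argument is slightly terser (it invokes $b_x$ and (\ref{E7}) directly and simply notes that a periodic solution of $b_x'=\lambda b_x$ must vanish), while you spell out the two cases explicitly, but the substance is identical.
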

\begin{proof}
By Theorem \ref{T1}, there is a $z\in\r{C}_\c{V}^1(\c{M};\c{B})$ such that $\c{V}z=\lambda z$ and $\|y-z\|_\infty\leq\epsilon|\f{R}\lambda|^{-1}$.
For every $x\in\c{M}$ let $b_x$ be as in the proof of Theorem \ref{T1} with $f=0$. It follows from (\ref{E7}) that $b_x$ is a periodic function.
On the other hand, $b_x'=\lambda b_x$. So, $b_x=0$ and hence $z=0$. This completes the proof.
\end{proof}
The following HUS result follows from Theorem \ref{T1}.
\begin{theorem}
Let $2\leq p\leq\infty$. Suppose that $\c{M}$ is a closed manifold, $\f{R}\lambda\neq0$, and $f\in\r{C}^{p-1}(\c{M};\c{B})$.
Then Equation (\ref{E1}) has $\r{C}^{p\to p-1}$-HUS with stability constant $|\f{R}\lambda|^{-1}$.
\end{theorem}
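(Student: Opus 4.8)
The plan is to deduce the statement from Theorem \ref{T1} by upgrading the regularity of the exact solution that theorem produces. Since a closed manifold is compact, (i) every vector field on $\c{M}$, in particular $\c{V}$, is complete and has a globally defined flow $\Phi:\bb{R}\times\c{M}\to\c{M}$; (ii) as $\c{V}\in\r{C}^p$ and $\c{M}\in\r{C}^{p+1}$, this flow is $\r{C}^p$ on $\bb{R}\times\c{M}$; and (iii) every $\r{C}^0$-map $\c{M}\to\c{B}$ is bounded, so $\|\cdot\|_\infty$ is finite on all the spaces in question. By (i), Theorem \ref{T1} applies: for every $\epsilon\ge0$ and every $y\in\r{C}^p(\c{M};\c{B})\subseteq\r{C}^1_\c{V}(\c{M};\c{B})$ with $\|\c{V}y-\lambda y-f\|_\infty\le\epsilon$ there is $z\in\r{C}^1_\c{V}(\c{M};\c{B})$ with $\c{V}z=\lambda z+f$ and $\|y-z\|_\infty\le|\f{R}\lambda|^{-1}\epsilon$. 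Everything in the assertion is then already in hand except the claim that this particular $z$ lies in $\r{C}^{p-1}(\c{M};\c{B})$.

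To prove $z\in\r{C}^{p-1}$ I would use the explicit form of $z$ recorded in the proof of Theorem \ref{T1}. With $g:=\c{V}y-\lambda y-f$, identity (\ref{E4})---after identifying $a_x(0)=y(x)$ and $a'_x-\lambda a_x-f\circ\Phi(\cdot,x)=g\circ\Phi(\cdot,x)$---reads $z=y+w$, where
$$w(x)=\int_0^\omega g(\Phi(s,x))e^{-\lambda s}\,\r{d}(s),\qquad \omega=+\infty\ \text{if}\ \f{R}\lambda>0,\qquad \omega=-\infty\ \text{if}\ \f{R}\lambda<0,$$
and (\ref{E7}) furnishes in addition the renewal identity $z\circ\Phi(t,x)=e^{\lambda t}\bigl[z(x)+\int_0^t f(\Phi(s,x))e^{-\lambda s}\,\r{d}(s)\bigr]$. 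Since $y\in\r{C}^p\subseteq\r{C}^{p-1}$ it suffices to show $w\in\r{C}^{p-1}(\c{M};\c{B})$. First, $g\in\r{C}^{p-1}(\c{M};\c{B})$ because $y\in\r{C}^p$, $\c{V}\in\r{C}^p$ and $f\in\r{C}^{p-1}$, and $\|g\|_\infty\le\epsilon$; together with (ii) this makes $(s,x)\mapsto g(\Phi(s,x))e^{-\lambda s}$ a $\r{C}^{p-1}$ function on $\bb{R}\times\c{M}$. The idea is to differentiate $w$ under the integral sign in $x$ up to order $p-1$: by the chain and Leibniz rules each such $x$-derivative of the integrand is a finite sum of terms built from the $x$-derivatives of $g$ (orders $\le p-1$), the $x$-derivatives of $\Phi(s,\cdot)$ (orders $\le p-1$) and $e^{-\lambda s}$; provided these are dominated by an $\r{L}^1$-function of $s$ on $[0,\omega)$, Lebesgue's Dominated Convergence Theorem---applied to difference quotients exactly as in the proof of Theorem \ref{T1}, where it was used to show that $z$ is continuous---gives that $w$ is $\r{C}^{p-1}$, with the expected integral formulas for its derivatives. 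Then $z=y+w\in\r{C}^{p-1}(\c{M};\c{B})$ is an exact solution with $\|y-z\|_\infty\le|\f{R}\lambda|^{-1}\epsilon$, which is precisely $\r{C}^{p\to p-1}$-HUS with stability constant $|\f{R}\lambda|^{-1}$.

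The only nontrivial point---and the one I expect to be the main obstacle---is the domination needed to differentiate under the integral sign. For each fixed $s$ the derivatives $D_x^k\Phi(s,\cdot)$ with $k\le p-1$ are bounded on the compact $\c{M}$, so the order-$k$ $x$-derivative of the integrand is bounded pointwise by $C_k\,\mu_k(s)\,e^{-\f{R}\lambda\,s}$, where $\mu_k(s):=\sup_{x\in\c{M}}\|D_x^k\Phi(s,x)\|$ and the finite quantities $\|D^jg\|_\infty$ have been absorbed into $C_k$. The difficulty is the behaviour of $\mu_k(s)$ as $|s|\to\infty$: differentiating the flow equation repeatedly, $D_x\Phi,\dots,D_x^k\Phi$ satisfy linear variational equations along the flow whose coefficients are bounded by compactness of $\c{M}$, which bounds $\mu_k(s)$ but only exponentially in $|s|$. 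One must then balance such a bound against the exponential weight $e^{-\f{R}\lambda\,s}$---on the half-line $[0,\omega)$ singled out by the sign of $\f{R}\lambda$---to secure an $\r{L}^1$-majorant; the renewal identity from (\ref{E7}), which isolates the flow's stretching over bounded time intervals, is the natural device for organizing this estimate. Once an admissible majorant is produced, the rest of the argument is the same Dominated Convergence bookkeeping already carried out for the continuity of $z$ in the proof of Theorem \ref{T1}.
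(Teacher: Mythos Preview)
Your approach is exactly the paper's: use compactness of $\c M$ to get completeness of $\c V$, apply Theorem~\ref{T1}, and then argue from the explicit formula~(\ref{E4}) that the resulting $z$ lies in $\r C^{p-1}$. The paper's proof is in fact a single sentence at this last step---it asserts that the regularity ``follows from~(\ref{E4}) and boundedness of higher derivatives of $y$ and $f$''---whereas you have spelled out what differentiating~(\ref{E4}) under the integral sign actually requires.

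In doing so you have identified a genuine gap that the paper's one-line argument glosses over. The $x$-derivatives of the integrand $g(\Phi(s,x))e^{-\lambda s}$ bring in the spatial flow derivatives $D_x^k\Phi(s,\cdot)$, and on a compact manifold the variational equations control these only by bounds of the form $\mu_k(s)\le C_k e^{L_k|s|}$; there is no reason to have $L_k<|\f R\lambda|$, so an $L^1$ majorant on $[0,\omega)$ is not available in general (think of a hyperbolic fixed point of $\c V$ with an eigenvalue larger than $|\f R\lambda|$). Your proposed remedy via the renewal identity does not close the gap either: rewriting $w(x)=h_T(x)+e^{-\lambda T}w(\Phi(T,x))$ with $h_T\in\r C^{p-1}$ recasts the problem as inverting $I-e^{-\lambda T}\Phi(T,\cdot)^*$ on $\r C^{p-1}$, but the operator norm of $\Phi(T,\cdot)^*$ on $\r C^{p-1}$ is governed by the very same flow-derivative growth, so the contraction condition $e^{-\f R\lambda\,T}\|\Phi(T,\cdot)^*\|_{\r C^{p-1}}<1$ need not hold for any $T$. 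In short, your proposal and the paper's proof share the same unproven step; justifying $z\in\r C^{p-1}$ in full generality appears to require an argument that goes beyond dominated convergence, or an additional hypothesis relating the hyperbolicity of $\c V$ to $\f R\lambda$.
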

\begin{proof}
Suppose that $y\in\r{C}^p(\c{M};\c{B})$ satisfies (\ref{E2}). Since
$\c{M}$ is closed, $\c{V}$ is automatically complete. By Theorem \ref{T1} there is a $z\in\r{C}_\c{V}^1(\c{M};\c{B})$ satisfying (\ref{E2.5})
with $K=|\f{R}\lambda|^{-1}$. Also it follows from (\ref{E4}) and boundedness of higher derivatives of $y$ and $f$
that $z\in\r{C}^{p-1}(\c{M};\c{B})$. This completes the proof.
\end{proof}
\begin{example}\label{Ex1}
\emph{Let $\c{M}$ be an open subset of $\bb{R}^n$ such that if $t>0$ and $x\in\c{M}$
then $tx\in\c{M}$. Suppose that $g:\c{M}\to\bb{R}$ is a $\r{C}^1$-function which is homogenous of degree zero, i.e. $g(tx)=g(x)$
for every $x\in\c{M}$, and $t>0$. Define a $\r{C}^1$-vector field $\c{V}:\c{M}\to\bb{R}^n$ by $x\mapsto g(x)x$.
Then for every $x\in\c{M}$ the integral curve of $\c{V}$ beginning at $x$ is given by
$t\mapsto e^{tg(x)}x$. Thus $\c{V}$ is complete and if $\f{R}\lambda\neq0$ and $f\in\r{C}^0(\c{M},\c{B})$
then it follows from Theorem \ref{T1} that,
$$\sum_{i=1}^n x_ig\frac{\p y}{\p x_i}=\lambda y+f,$$
has $\r{C}_\c{V}^1$-HUS. This generalizes \cite[Theorem 2.2(1)]{CimpeanPopa1}.}
\end{example}
\begin{example}
\emph{Let $\c{M}=\bb{R}^n$, $M=(m_{ij})$ be a real $n\times n$ matrix, and $v=(v_1,\ldots,v_n)$ be a vector in $\bb{R}^n$ such that $Mv=0$.
Let the vector field $\c{V}$ be defined by $x\mapsto Mx+v$. Then for every $x$ the curve defined by $t\mapsto e^{tM}x+tv$ is the integral
curve of $\c{V}$ beginning at $x$. Thus $\c{V}$ is complete and if $\f{R}\lambda\neq0$ and $f\in\r{C}^0(\c{M},\c{B})$
then it follows from Theorem \ref{T1} that,
$$\sum_{i=1}^n(v_i+\sum_{j=1}^nm_{ij}x_j)\frac{\p y}{\p x_i}=\lambda y+f$$
has $\r{C}_\c{V}^1$-HUS. This generalizes some of the results of \cite{Jung5}.}
\end{example}
We list three classes of very well-known examples of complete vector fields:
Every vector field with compact support is complete. Left or right invariant vector fields on Lie groups are complete.
The geodesic vector field on the tangent bundle of a complete Riemannian manifold is complete. (Let us write explicitly this latter vector field
in a local coordinate system: Let $X$ be a Riemannian manifold of dimension $k$, let $(x_1,\ldots,x_{k})$
denote a point in a local coordinate system of $X$ and let $(x_{1},\ldots,x_{2k})$ denote a point in the corresponding local coordinate
system of the tangent bundle of $X$. Then the geodesic vector field is given by
$\sum_{i=1}^kx_{k+i}\frac{\p }{\p x_i}-\sum_{i,j,j'=1}^{k}\Gamma_{j,j'}^ix_{k+j}x_{k+j'}\frac{\p }{\p x_{k+i}}$
where $\Gamma_{j,j'}^i$ denotes the Christoffel symbol of the Riemannian connection in the local coordinate system.)

We end this paper by some remarks about Equation (\ref{E1}) and its stability.
\begin{remark}
\emph{\begin{enumerate}
\item[(i)] Results \cite[Theorem 2.8]{PopaRasa1} and \cite[Theorem 2.2(2)]{CimpeanPopa1} show that the assumption $\f{R}\lambda\neq0$
can not be removed from the statement of Theorem \ref{T1}.
\item[(ii)] Let $\c{M}$ be a Riemannian manifold. The basic properties of gradient vector fields implies that the
following three identities are equivalent for every $f,y\in\r{C}^2(\c{M};\bb{R})$.
$$(\r{grad}f)y=y+f,\hspace{3mm}(\r{grad}y)f=f+y,\hspace{3mm}\langle\r{grad}f,\r{grad}y\rangle=f+y.$$
\item[(iii)] The Hamiltonian version of the above (Lagrangian) statement is as follows. Let $\c{M}$ be a symplectic manifold.
The following three identities are equivalent for every $f,y\in\r{C}^2(\c{M};\bb{R})$.
($\{,\}$ denotes the corresponding Poisson bracket and $\chi_f$ denotes the Hamiltonian vector field associated to $f$.)
$$\chi_fy=y+f,\hspace{3mm}\chi_yf=-(f+y),\hspace{3mm}\{f,y\}=f+y.$$
\end{enumerate}}
\end{remark}
%%%%%%%%%%%%%%%%%%%%%%%%%%%%%%%%%%%%%%%%%%%%%%%%%%%%%%%%%%%%%%%%%%%%%%%%%%%%%%%%%%%%%%%%%%%%%%%%%%%%%%%%%%%%%%%%%%%%%%%%%
%%%%%%%%%%%%%%%%%%%%%%%%%%%%%%%%%%%%%%%%%%%%%%%%%%%%%%%%%%%%%%%%%%%%%%%%%%%%%%%%%%%%%%%%%%%%%%%%%%%%%%%%%%%%%%%%%%%%%%%%%
%%%%%%%%%%%%%%%%%%%%%%%%%%%%%%%%%%%%%%%%%%%%%%%%%%%%%%%%%%%%%%%%%%%%%%%%%%%%%%%%%%%%%%%%%%%%%%%%%%%%%%%%%%%%%%%%%%%%%%%%%%
\bibliographystyle{amsplain}

\begin{thebibliography}{10}
%%%%%%%%%%%%%%%%%%%%%%%%%%%%%%%%%%%%%%%%%%%%%%%%%%%%%%%%%%%%%%%%%%%%%%%%%%%%%%%%%%%%%%%%%%%%%%%%%%%%%%%%%%%%%%%%%%%%%%%%%%
%%%%%%%%%%%%%%%%%%%%%%%%%%%%%%%%%%%%%%%%%%%%%%%%%%%%%%%%%%%%%%%%%%%%%%%%%%%%%%%%%%%%%%%%%%%%%%%%%%%%%%%%%%%%%%%%%%%%%%%%%%
\bibitem{CimpeanPopa1}
D.S. Cimpean, D. Popa,
\emph{Hyers–-Ulam stability of Euler's equation},
Applied Mathematics Letters 24, no. 9 (2011): 1539--1543.
%%%%%%%%%%%%%%%%%%%%%%%%%%%%%%%%%%%%%%%%%%%%%%%%%%%%%%%%%%%%%%%%%%%%%%%%%%%%%%%%%%%%%%%%%%%%%%%%%%%%%%%%%%%%%%%%%%%%%%%%%%
\bibitem{Forti1}
G.L. Forti,
\emph{The stability of homomorphisms and amenability, with applications to functional equations},
Abh. Math. Sem. Univ. Hamburg 57 (1987), 215–-226.
%%%%%%%%%%%%%%%%%%%%%%%%%%%%%%%%%%%%%%%%%%%%%%%%%%%%%%%%%%%%%%%%%%%%%%%%%%%%%%%%%%%%%%%%%%%%%%%%%%%%%%%%%%%%%%%%%%%%%%%%%%
\bibitem{Hyers1}
D.H. Hyers,
\emph{On the stability of the linear functional equation},
Proc. Nat. Acad. Sci. U.S.A. 27 (1941), 222–-224.
%%%%%%%%%%%%%%%%%%%%%%%%%%%%%%%%%%%%%%%%%%%%%%%%%%%%%%%%%%%%%%%%%%%%%%%%%%%%%%%%%%%%%%%%%%%%%%%%%%%%%%%%%%%%%%%%%%%%%%%%%%
\bibitem{Jung3}
S.-M. Jung,
\emph{Hyers-–Ulam stability of linear differential equations of first order (III)},
Journal of Mathematical Analysis and Applications 311, no. 1 (2005): 139--146.
%%%%%%%%%%%%%%%%%%%%%%%%%%%%%%%%%%%%%%%%%%%%%%%%%%%%%%%%%%%%%%%%%%%%%%%%%%%%%%%%%%%%%%%%%%%%%%%%%%%%%%%%%%%%%%%%%%%%%%%%%%
\bibitem{Jung5}
S.-M. Jung,
\emph{Hyers--Ulam stability of linear partial differential equations of first order},
Applied Mathematics Letters 22, no. 1 (2009): 70--74.
%%%%%%%%%%%%%%%%%%%%%%%%%%%%%%%%%%%%%%%%%%%%%%%%%%%%%%%%%%%%%%%%%%%%%%%%%%%%%%%%%%%%%%%%%%%%%%%%%%%%%%%%%%%%%%%%%%%%%%%%%%
\bibitem{Jung1}
S.-M. Jung,
\emph{Hyers–-Ulam–-Rassias stability of functional equations in nonlinear analysis},
Vol. 48. Springer Science \& Business Media, 2011.
%%%%%%%%%%%%%%%%%%%%%%%%%%%%%%%%%%%%%%%%%%%%%%%%%%%%%%%%%%%%%%%%%%%%%%%%%%%%%%%%%%%%%%%%%%%%%%%%%%%%%%%%%%%%%%%%%%%%%%%%%%
\bibitem{Lang1}
S. Lang,
\emph{Introduction to differentiable manifolds},
Springer Science \& Business Media, 2006.
%%%%%%%%%%%%%%%%%%%%%%%%%%%%%%%%%%%%%%%%%%%%%%%%%%%%%%%%%%%%%%%%%%%%%%%%%%%%%%%%%%%%%%%%%%%%%%%%%%%%%%%%%%%%%%%%%%%%%%%%%%
\bibitem{MiuraJungTakahasi1}
T. Miura, S.-M. Jung, S.-E. Takahasi,
\emph{Hyers–-Ulam–-Rassias stability of the Banach space valued linear differential equations $y'= \lambda y$},
J. Korean Math. Soc. 41, no. 6 (2004): 995--1005.
%%%%%%%%%%%%%%%%%%%%%%%%%%%%%%%%%%%%%%%%%%%%%%%%%%%%%%%%%%%%%%%%%%%%%%%%%%%%%%%%%%%%%%%%%%%%%%%%%%%%%%%%%%%%%%%%%%%%%%%%%%
\bibitem{MiuraMiyajimaTakahasi1}
T. Miura, S. Miyajima, S.-E. Takahasi,
\emph{Hyers–-Ulam stability of linear differential operator with constant coefficients},
Mathematische Nachrichten 258, no. 1 (2003): 90--96.
%%%%%%%%%%%%%%%%%%%%%%%%%%%%%%%%%%%%%%%%%%%%%%%%%%%%%%%%%%%%%%%%%%%%%%%%%%%%%%%%%%%%%%%%%%%%%%%%%%%%%%%%%%%%%%%%%%%%%%%%%%
\bibitem{PopaRasa1}
D. Popa, I. Rasa,
\emph{On the Hyers–-Ulam stability of the linear differential equation},
Journal of Mathematical Analysis and Applications 381, no. 2 (2011): 530--537.
%%%%%%%%%%%%%%%%%%%%%%%%%%%%%%%%%%%%%%%%%%%%%%%%%%%%%%%%%%%%%%%%%%%%%%%%%%%%%%%%%%%%%%%%%%%%%%%%%%%%%%%%%%%%%%%%%%%%%%%%%%
\bibitem{Sadr1}
M.M. Sadr,
\emph{On the stability of a generalized additive functional equation},
Aequationes mathematicae 90, no. 3 (2016): 597–-606. (arXiv:1506.06521 [math.FA])
%%%%%%%%%%%%%%%%%%%%%%%%%%%%%%%%%%%%%%%%%%%%%%%%%%%%%%%%%%%%%%%%%%%%%%%%%%%%%%%%%%%%%%%%%%%%%%%%%%%%%%%%%%%%%%%%%%%%%%%%%%
\bibitem{Ulam1}
S.M. Ulam,
\emph{A Collection of the mathematical problems},
Interscience Publ., New York, 1960.
%%%%%%%%%%%%%%%%%%%%%%%%%%%%%%%%%%%%%%%%%%%%%%%%%%%%%%%%%%%%%%%%%%%%%%%%%%%%%%%%%%%%%%%%%%%%%%%%%%%%%%%%%%%%%%%%%%%%%%%%%%
\bibitem{WangZhouSun1}
G. Wang, M. Zhou, L. Sun,
\emph{Hyers–-Ulam stability of linear differential equations of first order},
Applied Mathematics Letters 21, no. 10 (2008): 1024--1028.
%%%%%%%%%%%%%%%%%%%%%%%%%%%%%%%%%%%%%%%%%%%%%%%%%%%%%%%%%%%%%%%%%%%%%%%%%%%%%%%%%%%%%%%%%%%%%%%%%%%%%%%%%%%%%%%%%%%%%%%%%%
\end{thebibliography}

%%%%%%%%%%%%%%%%%%%%%%%%%%%%%%%%%%%%%%%%%%%%%%%%%%%%%%%%%%%%%%%%%%%%%%%%%%%%%%%%%%%%%%%%%%%%%%%%%%%%%%%%%%%%%%%%%%%%%%%%%%
%%%%%%%%%%%%%%%%%%%%%%%%%%%%%%%%%%%%%%%%%%%%%%%%%%%%%%%%%%%%%%%%%%%%%%%%%%%%%%%%%%%%%%%%%%%%%%%%%%%%%%%%%%%%%%%%%%%%%%%%%%
%%%%%%%%%%%%%%%%%%%%%%%%%%%%%%%%%%%%%%%%%%%%%%%%%%%%%%%%%%%%%%%%%%%%%%%%%%%%%%%%%%%%%%%%%%%%%%%%%%%%%%%%%%%%%%%%%%%%%%%%%%
%%%%%%%%%%%%%%%%%%%%%%%%%%%%%%%%%%%%%%%%%%%%%%%%%%%%%%%%%%%%%%%%%%%%%%%%%%%%%%%%%%%%%%%%%%%%%%%%%%%%%%%%%%%%%%%%%%%%%%%%%%
%%%%%%%%%%%%%%%%%%%%%%%%%%%%%%%%%%%%%%%%%%%%%%%%%%%%%%%%%%%%%%%%%%%%%%%%%%%%%%%%%%%%%%%%%%%%%%%%%%%%%%%%%%%%%%%%%%%%%%%%%%
\end{document}